\definecolor{darkgreen}{rgb}{0.0,0.7,0.0}
\newenvironment{bs}{\noindent\color{darkgreen} BS:}{}
\newenvironment{mk}{\noindent\color{blue} MK:} {}
\newenvironment{vd}{\noindent\color{red} VD:} {}
\newcommand{\refthm}[1]{Theorem~\ref{#1}\xspace}
\newcommand{\refcor}[1]{Corollary~\ref{#1}\xspace}
\newcommand{\reflem}[1]{Lemma~\ref{#1}\xspace}
\newcommand{\refprp}[1]{Proposition~\ref{#1}\xspace}
\newcommand{\refrem}[1]{Remark~\ref{#1}\xspace}
\newcommand{\refsec}[1]{Section~\ref{#1}\xspace}
\newcommand{\refex}[1]{Example~\ref{#1}\xspace}
\newcommand{\IFF}{if and only if\xspace}
\newcommand{\homo}{homomorphism\xspace}
\newcommand{\tm}{transformation monoid\xspace}
\newcommand{\sse}{\subseteq}
\newcommand{\ov}{\overline}
\renewcommand{\phi}{\varphi}
\newcommand{\sig}{\sigma}
\newcommand{\set}[2]{\left\{#1\mathrel{\left|\vphantom{#1}\vphantom{#2}\right.}#2\right\}}
\newcommand{\oneset}[1]{\left\{\mathinner{#1}\right\}}
\newcommand{\smallset}[1]{\left\{#1\right\}}
\newcommand{\abs}[1]{\left|\mathinner{#1}\right|}
\newcommand{\gen}[1]{\left\langle\mathinner{#1}\right\rangle}
\newcommand{\N}{\mathbb{N}}
\newtheorem{theorem}{Theorem}[section]
\newtheorem{proposition}[theorem]{Proposition}
\newtheorem{lemma}[theorem]{Lemma}
\newtheorem{corollary}[theorem]{Corollary}
\newtheorem{remrk}[theorem]{Remark}
\newenvironment{remark}{\begin{remrk}\upshape}{\end{remrk}}
\newtheorem{expl}[theorem]{Example}
\newenvironment{example}[1][]{\ifthenelse{\equal{#1}{}}{\begin{expl}\upshape}{\begin{expl}[#1]\upshape}}{\end{expl}}
\newcommand{\EEX}{\hspace*{\fill}\ensuremath{\Diamond}} 
\newcommand{\ERM}{\hspace*{\fill}\ensuremath{\Diamond}} 
\newcommand{\dotcup}{\mathbin{\dot{\cup}}}
\begin{document}


\title{The Krohn-Rhodes Theorem \\ and Local Divisors}

\author{Volker Diekert\thanks{Institut f\"ur Formale Methoden der Informatik, Universit{\"a}t Stuttgart, Germany} \and
  Manfred Kuf\-leitner$^{*,}$\thanks{The second author was
    supported by the German Research Foundation (DFG) under grant
    \mbox{DI 435/5-1}.} \and
  Benjamin Steinberg\thanks{Department of Mathematics, City College of New York, USA}}


\date{}

\maketitle

\begin{abstract}
  \noindent
  \textbf{Abstract:}
  We give a new proof of the Krohn-Rhodes theorem using local
  divisors. The proof provides nearly as good a decomposition in terms
  of size as the holonomy decomposition of Eilenberg, avoids induction
  on the size of the state set, and works exclusively with monoids
  with the base case of the induction being that of a group.

  \medskip

  \noindent
  \textbf{Keywords:} automaton, monoid, 
  transformation monoid, wreath product, decomposition
\end{abstract}

\section{Introduction}

The Krohn-Rhodes theorem is one of the fundamental results in finite
semigroup theory.  It asserts that a finite semigroup can be
decomposed in a suitable sense into a wreath product of
well-controlled finite simple groups and copies of a $3$-element
monoid, all of whose elements are idempotents.  In addition to the
elegance of the result, it has a number of important applications.
For instance, it can be used as an induction scheme for proving
Sch\"utzenberger's celebrated theorem on star-free
languages~\cite{sch65sf}, as well as generalizations due to
Straubing~\cite{str79}.

There are a number of proofs of the theorem in the literature, see
e.g.~\cite{Eil76,esik00,ginz68,hol82,krt68arbib8,Lallementproof,Lallementfixed,%
mt69mst,rs09qtheory,str94,Zeiger}.
They tend to fall into three classes.  The first class consists of
fundamentally algebraic proofs.  These proofs follow the line of proof
of Krohn and Rhodes from~\cite{krt68arbib8}.  They are based on
induction on the size of the semigroup and rely on the non-obvious
Trichotomy Lemma of Krohn and Rhodes which states that a finite
semigroup is either left simple, cyclic or can be written as $V\cup T$
where $V$ is a proper left ideal and $T$ is a proper subsemigroup.
The basis of the induction then becomes left simple semigroups and
cyclic groups.  These proofs use transformation semigroups only to
adjoin constant maps and to keep the wreath product decompositions
simpler.  The state set of the transformation semigroup does not play
a role in the induction. Lallement attempted an approach using
exclusively monoids and avoiding transformation
representations~\cite{Lallementproof}, but it had a flaw that can only
be fixed by passing to transformation monoids~\cite{Lallementfixed}.

The second type of proof uses transformation semigroups and performs
an induction based on both semigroup size and state size.  A typical
example is the first of the two proofs of the Krohn-Rhodes theorem
given by Eilenberg in~\cite{Eil76}.  The induction scheme relies on
working with semigroups rather than monoids and essentially throws the
Trichotomy Lemma into the states.  These proofs tend to have a very
large blow up in the state size.  This approach also tends to be more
technical (and less conceptual) than the algebraic approach above.

The third type of proof is based on Zeiger's method~\cite{Zeiger}.
The definitive form of this approach is the holonomy theorem of
Eilenberg~\cite{Eil76}.  This approach does not use induction at all.
It provides instead a direct decomposition of a transformation
semigroup into a wreath product of permutation groups with adjoined
constant maps.  This proof keeps better control over the state set and
seems to give the most efficient decomposition.  It is however the
most technical argument.

Our proof is based on the notion of local divisors and encompasses
features of all the above proofs.  Like Lallement's proof, we avoid
semigroups.  Like the algebraic proofs, we avoid putting the state set
into the induction scheme.  However, we avoid the Trichotomy Lemma:
the base case of our argument is that of groups.  A local divisor is a
monoidal generalization of a Sch\"utzenberger group; the latter
appears implicitly in the holonomy proof.

The concept of a local divisor is an old one. In commutative algebra
it has been introduced by Meyberg in 1972, see~\cite{FeTo02,Mey72}.
In finite semigroup theory and formal language the explicit definition
of a local divisor was first given in~\cite{dg06IC}. It was used for
proving that local temporal logics are expressively complete for
partially commutative monoids~\cite{dg06IC}. In \cite{dg08SIWT:short}
it has been used for the same purpose in the context of finite and infinite words. 
A category
generalization is being used by Costa and the third author in the
context of symbolic dynamics (unpublished).

The key idea of local divisors is the following. If $e$ is an
idempotent of a monoid $M$, then $eMe=eM\cap Me$ is a subsemigroup
which is a monoid with identity $e$.  The group of units of $eMe$ is
the $\mathcal H$-class of $e$.  If $M$ acts faithfully on the right of
a set $X$, then $eMe$ acts faithfully on the right of $Xe$.
Sch\"utzenberger famously put a group structure on the $\mathcal
H$-class of an arbitrary element $c\in M$ with $c$ as the
identity~\cite{cp67,rs09qtheory}. The local divisor construction
extends this, by a multiplication $\circ$, to a monoid structure on $cM\cap Mc$ whose group of units
is the Sch\"utzenberger group.  
 The monoid $(cM\cap Mc, \circ)$ acts faithfully on $Xc$
whenever $M$ acts faithfully on $X$.  We shall use the submonoid
$(cMc\cup \{c\},\circ)$ in this paper since it might have a smaller cardinality
than $cM\cap Mc$.

\section{Preliminaries}\label{sec:prem}

Apart from basic knowledge about monoids, we have tried to keep this
paper self-contained. In particular, we give a short introduction to
transformation monoids in \refsec{sec:vd1}; and we present the wreath
product construction and some of its basic properties in
Sections~\ref{sec:vd2} through~\ref{vd4}.  These first few sections of
the preliminaries are only slight adaptations of standard
notions. \refsec{ssc:vdloc} explains the notion of local divisor and
its basic properties.  It is a main tool used in our proof of the
Krohn-Rhodes Theorem.  Some standard proofs from this section are
relegated to the appendix, \refsec{app}.

\subsection{Transformation monoids}\label{sec:vd1}

A \emph{right action} of a monoid $M$ on a
nonempty 
set $X$ is
a 
mapping $X \times M \to X$, written $(x,m) \mapsto x \cdot m$,
satisfying $x \cdot 1 = x$ and $(x \cdot m_1) \cdot m_2 = x \cdot (m_1
m_2)$ for all $x \in X$ and $m_1, m_2 \in M$.  An action is therefore
the same thing as a \homo of $\sig\colon M \to T(X)$. Here and in the
following $T(X)$ denotes the monoid of all mappings from $X$ to
itself. It is a monoid by letting $fg = h$ where $h$ is defined by
$h(x) = g(f(x))$ for $f,g \in T(X)$ and $x \in X$.  The action of $M$
is \emph{faithful} if $\sig$ is injective. Thus, the action is
faithful, if each $m \in M$ is uniquely determined by knowing $ x
\cdot m$ for all $x \in X$.  The pair $(X,M)$ is called a
\emph{transformation monoid}. We do not require the action of
transformation monoids to be faithful. \emph{Left actions} are defined
symmetrically.

The pair $(\oneset{1},M)$ is a transformation monoid, but the action
is not faithful unless $M$ is trivial. The multiplication defines the
faithful transformation monoid $(M,M)$.  The pair $(X,T(X))$ is
another faithful transformation monoid.  The monoid $T(X)$ acts on the
right, which justifies to write $xf$ instead of $f(x)$. This
suffix-notation turns out to be convenient in the following.

A \emph{morphism} between transformation monoids $(Y,N)$ and $(X,M)$
is a pair $(\phi,\psi)$ where $\varphi\colon Y \to X$ is a mapping and
$\psi \colon N \to M$ is a \homo such that $\varphi(y \cdot n) =
\varphi(y) \cdot \psi(n)$ for all $y \in Y$ and $n \in N$. It is
called \emph{surjective} (resp.{} \emph{injective}) if both mappings
$\phi$ and $\psi$ are surjective (resp.{} injective).  It is an
isomorphism, if $\phi$ and $\psi$ are bijections.  The \tm $(X,M)$ is
called \emph{finite} if both $X$ and $M$ are finite.

\subsection{Wreath products}\label{sec:vd2}

Let $(X,M)$ and $(Y,N)$ be transformation monoids. For a moment let
$+$ denote the multiplication in $M$ (which might be
non-commutative). The functions $M^Y$ from $Y$ to $M$ form a
monoid by componentwise multiplication. That is, $f+g$ is defined by
$y(f+g) = yf + yg$ for $y\in Y$ and $f,g \in M^Y$.  As $N$ acts on $Y$
on the right, it induces a left-action $*$ of $N$ on $M^Y$
defined by:
\begin{equation*}
  y (n* f) = (y\cdot n)f.
\end{equation*}
With this definition we turn the set $M^Y\times N$ into a monoid as a
\emph{semidirect product} denoted by $M^Y \rtimes N$. The
multiplication in $M^Y \rtimes N$ is given by:
\begin{equation*}
  (f,n)\cdot (g,k) = (f+(n *g),nk).
\end{equation*}
The multiplication is associative and we have
$(f,n)\cdot (g,k)\cdot (h,\ell) = (f+(n *g) + (nk*h),nk\ell).$ The
\emph{wreath product} $M \wr (Y,N)$ is this semidirect product.
The monoid $M \wr (Y,N)$ acts on $X \times Y$ by $(x,y) \cdot (f,n) =
(x \cdot yf,y \cdot n)$. The resulting transformation monoid is
denoted by $(X,M) \wr (Y,N)$.  It is the \emph{wreath product} of the
transformation monoids $(X,M)$ and $(Y,N)$. If both $(X,M)$ and
$(Y,N)$ are faithful, then $(X,M) \wr (Y,N)$ is faithful, too.  The
wreath product of transformation monoids is associative up to
isomorphism. This fact is well-known and stated as \reflem{lem:assoc}.
The proof follows from a straightforward calculation based on the
canonical bijection 
$(M^Y \times N)^Z \to M^{Y\times Z} \times N^{Z}$.
Details can be found  in the appendix, \refsec{app}.

\begin{lemma}\label{lem:assoc}
  $\big( (X,M) \wr (Y,N) \big) \wr (Z,P)$ and $(X,M) \wr \big( (Y,N)
  \wr (Z,P) \big)$ are isomorphic.
\end{lemma}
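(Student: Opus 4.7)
The plan is to produce an explicit isomorphism of transformation monoids $(\phi,\psi)\colon \big((X,M)\wr(Y,N)\big)\wr(Z,P) \to (X,M)\wr\big((Y,N)\wr(Z,P)\big)$. On state sets I take $\phi\colon (X\times Y)\times Z \to X\times (Y\times Z)$ to be the canonical associator $((x,y),z)\mapsto(x,(y,z))$. On monoids I use exactly the bijection advertised in the excerpt: an element $(H,p)$ of $(M^Y\rtimes N)^Z\rtimes P$, where $H(z)=(f_z,n_z)\in M^Y\times N$, is sent to $\psi(H,p)=(F,(g,p))$ with $F\in M^{Y\times Z}$ defined by $F(y,z)=yf_z$ and $g\in N^Z$ defined by $zg=n_z$. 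This map is visibly bijective by currying, so the work is entirely to check that $\psi$ respects multiplication and that $(\phi,\psi)$ intertwines the two actions.

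For the homomorphism property, I would unwind $(H_1,p_1)\cdot(H_2,p_2)=(H_1+(p_1*H_2),\,p_1p_2)$ using the outer wreath-product rule, then evaluate component-wise at $z\in Z$ using the inner wreath-product rule, obtaining
\[
(f_{1,z},n_{1,z})\cdot(f_{2,z\cdot p_1},n_{2,z\cdot p_1})=\big(f_{1,z}+n_{1,z}*f_{2,z\cdot p_1},\;n_{1,z}\,n_{2,z\cdot p_1}\big).
\]
On the other side I multiply $\psi(H_1,p_1)\cdot\psi(H_2,p_2)$ in $M^{Y\times Z}\rtimes(N^Z\rtimes P)$; this produces a first coordinate in $M^{Y\times Z}$ and a second coordinate in $N^Z\rtimes P$. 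The second coordinate matches because componentwise multiplication in $N^Z$ with the $P$-twist is precisely the product of the $n$-components above. For the first coordinate I would verify, by evaluation at $(y,z)$, that the $M$-entry agrees with $y\big(f_{1,z}+n_{1,z}*f_{2,z\cdot p_1}\big)$; the critical identity is that the $(N^Z\rtimes P)$-action on $M^{Y\times Z}$ acts on $(y,z)$ by first applying $n_{1,z}$ to $y$ and then applying $p_1$ to $z$, which reproduces $y\cdot n_{1,z}$ in the $Y$-slot and $z\cdot p_1$ in the $Z$-slot.

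Finally I would check the action compatibility $\phi\big(((x,y),z)\cdot(H,p)\big)=\phi((x,y),z)\cdot\psi(H,p)$. The left-hand side unfolds to $(x\cdot yf_z,\,(y\cdot n_z,\,z\cdot p))$ and the right-hand side, by definition of the wreath-product action on $X\times(Y\times Z)$, unfolds to $(x\cdot(y,z)F,\,(y,z)\cdot(g,p))$; both agree by the defining formulas $(y,z)F=yf_z$ and $(y,z)\cdot(g,p)=(y\cdot zg,\,z\cdot p)=(y\cdot n_z,\,z\cdot p)$.

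The main obstacle is purely bookkeeping: keeping three monoid actions and two semidirect-product twists straight through a nested computation. Conceptually nothing more than currying $M^Y\times N$ over $Z$ to $M^{Y\times Z}\times N^Z$ and tracking where the $P$-twist and the $N$-twist land is happening, so once the bijection $\psi$ is written down the verification reduces to carefully matching components on both sides.
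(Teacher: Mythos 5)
Your proposal is correct and takes exactly the route the paper indicates: the associator on state sets together with the canonical currying bijection $(M^Y\times N)^Z \to M^{Y\times Z}\times N^Z$ on monoids, followed by a direct component-wise verification of the homomorphism property and the action compatibility. All the key identities you single out (in particular $(y,z)\cdot(g_1,p_1)=(y\cdot n_{1,z},\,z\cdot p_1)$) check out under the paper's conventions, so this matches the paper's ``straightforward calculation'' proof.
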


\begin{remark}\label{rem:mon}
  The number (of isomorphism classes) of finite transformation monoids
  is countable.  The wreath product operation turns this countable set
  into an infinite monoid with $(\smallset{1}, \smallset{1})$ as a
  neutral element.  According to a standard convention, we define the
  wreath product over an empty index set as the trivial \tm
  $(\smallset{1}, \smallset{1})$.
  \ERM
\end{remark}

\subsection{Divisors}\label{sec:vd3}

A monoid $M$ \emph{divides} a monoid $N$, written as $M \prec N$, if
$M$ is the homomorphic image of a submonoid of $N$.  This notion
immediately extends to \tm{}s: A \tm $(X,M)$ \emph{divides} $(Y,N)$,
if there exists a \tm $(Y',N')$ together with a surjective morphism
from $(Y',N')$ onto $(X,M)$ and an injective morphism from $(Y',N')$
into $(Y,N)$. In particular, $(X,M) \prec (Y,N)$ implies $M \prec
N$. The divisor realation yields a partially defined surjection from
$Y$ to~$X$ where the domain is $Y'\sse Y$.  However, the Krohn-Rhodes
decomposition holds for totally defined surjections from $Y$ to
$X$. Thus, it is enough (and more convenient here) to restrict
ourselves to totally defined surjections.  For this we introduce the
notion of strong division: We say that $(X,M)$ \emph{strongly divides}
$(Y,N)$, if there exists a submonoid $N'$ of $N$ and a surjective
morphism from $(Y,N')$ onto $(X,M)$. In this case we also say that
$(X,M)$ is a \emph{strong divisor} of $(Y,N)$ and we write $(X,M)
\prec (Y,N)$. Every strong divisor is a divisor. This is why our
notation $(X,M) \prec (Y,N)$ is ``on the safe side''.  Another way to
express $(X,M) \prec (Y,N)$ is that there exists a surjection $\varphi
: Y \to X$, a submonoid $N'$ of $ N$, and a surjective homomorphism
$\psi \colon N' \to M$ such that $\varphi(y) \cdot \psi(n) = \varphi(y
\cdot n)$.

The notion of divisor is closely related to Eilenberg's notion of
a covering \cite{Eil76}. As we use here strong division, we restrict
the definition of a covering to totally defined surjections: Let
$(X,M)$ and $(Y,N)$ be transformation monoids and $\varphi \colon Y
\to X$ be any surjective mapping.  An element $\widehat{m} \in N$ is
called a \emph{cover} of $m\in M$ if $\varphi(y) \cdot m = \varphi(y
\cdot \widehat{m})$ for all $y\in Y$.  This defines a submonoid
$S_\phi$ of $M\times N$ 
as follows:
\begin{equation*}
  S_\phi 
= \set{(m,\widehat{m}) \in M\times N}{\widehat{m} \text{ covers  $m$}}.
\end{equation*}
The intuition is that $(m,\widehat{m}) \in S_\varphi$ says that
$\widehat{m}$ can ``simulate'' $m$ in the sense that, instead of
computing $\varphi(y) \cdot m$ in $(X,M)$ we can do the computation $y
\cdot \widehat{m}$ in $(Y,N)$ and then apply $\varphi$.  Let $\pi_i
\colon M \times N \to M$ is the projection to the $i$-th component, $i
= 1,2$.  We say that $\varphi$ is a \emph{covering} if
$\pi_1(S_\varphi) = M$, i.e., every $m\in M$ has some cover.

It turns out that $\varphi$ defines a division $(X,M) \prec (Y,N)$
\IFF there is submonoid $R_\phi \sse S_\phi $ such that $\pi_1\colon
R_\phi \to M$ is surjective and $\pi_2\colon R_\phi \to N$ is
injective.  Indeed, if $(X,M) \prec (Y,N)$ is due to a pair
$(\phi,\psi)$, then we can choose $R_\phi = \set{(\psi(n), n)}{n \in
  N'}$. The other way round, let $R_\phi \sse S_\phi $ such that
$\pi_1\colon R_\phi \to M$ is surjective and $\pi_2\colon R_\phi \to
N$ is injective. Then we obtain a surjective \homo $\psi$ from $N'=
\pi_2(R_\phi)$ onto $M$. It follows that the pair $(\phi,\psi)$ is a
surjective morphism of $(Y,N')$ onto $(X,M)$, and the \tm $(Y,N')$
embeds into $(Y,N)$.

The following proposition collects some useful properties and
relations between coverings and strong divisors. In particular, for
faithful transformation monoids the notions of covering and strong
divisor become equivalent.

\begin{proposition}\label{cover}
  Let $(X,M)$ and $(Y,N)$ be transformation monoids and let $\varphi
  \colon Y \to X$ be surjective.
  \begin{enumerate}
  \item\label{aaa:cover} If $M$ is generated by $A \sse M$ and every
    $a\in A$ has a cover $\widehat{a} \in N$, then $\varphi \colon Y
    \to X$ is a covering.
  \item\label{bbb:cover} If $\varphi \colon Y \to X$ is a covering and
    $(X,M)$ is faithful, then $(X,M) \prec (Y,N)$. In
    particular, $M \prec N$.
  \end{enumerate}
\end{proposition}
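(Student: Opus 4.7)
The plan is to handle the two parts separately, using as the main tool the observation that the set of covers behaves nicely under multiplication.

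For part~\ref{aaa:cover}, I would first show that the set
\[
C = \set{m \in M}{m \text{ has a cover in } N}
\]
is a submonoid of $M$. For the identity, note that $1_N$ covers $1_M$ because $\varphi(y)\cdot 1_M = \varphi(y) = \varphi(y\cdot 1_N)$. For closure under multiplication, if $\widehat{m_1}$ covers $m_1$ and $\widehat{m_2}$ covers $m_2$, then a direct computation
\[
\varphi(y)\cdot(m_1 m_2) = \bigl(\varphi(y)\cdot m_1\bigr)\cdot m_2 = \varphi(y\cdot\widehat{m_1})\cdot m_2 = \varphi\bigl(y\cdot(\widehat{m_1}\widehat{m_2})\bigr)
\]
shows that $\widehat{m_1}\widehat{m_2}$ covers $m_1 m_2$. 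Since $C$ is a submonoid containing the generating set $A$, we obtain $C = M$, which is precisely the statement that $\varphi$ is a covering.

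For part~\ref{bbb:cover}, I want to use the characterization of strong division stated in the paragraph preceding the proposition, namely to exhibit a submonoid $R_\varphi \subseteq S_\varphi$ such that $\pi_1\colon R_\varphi \to M$ is surjective and $\pi_2\colon R_\varphi \to N$ is injective. The key point where faithfulness enters is the following: if some $\widehat{m}\in N$ covers both $m$ and $m'$ in $M$, then for every $y\in Y$
\[
\varphi(y)\cdot m = \varphi(y\cdot\widehat{m}) = \varphi(y)\cdot m',
\]
and since $\varphi$ is surjective, $x\cdot m = x\cdot m'$ for all $x\in X$, which by faithfulness of $(X,M)$ forces $m=m'$. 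Hence each element of $N$ covers at most one element of $M$, so the projection $\pi_2\colon S_\varphi \to N$ is already injective. Taking $R_\varphi = S_\varphi$, surjectivity of $\pi_1$ is exactly the hypothesis that $\varphi$ is a covering.

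The main (and essentially only) obstacle is the injectivity/well-definedness step: without faithfulness of $(X,M)$ the map sending a cover $\widehat{m}$ back to $m$ need not be well-defined, and this is precisely the place where the hypothesis is needed. Once this is observed, everything else is a routine verification using the submonoid property of covers established in part~\ref{aaa:cover}, and the assertion $M\prec N$ follows because strong division of transformation monoids implies division of the underlying monoids.
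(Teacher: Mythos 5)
Your proposal is correct and follows essentially the same route as the paper: part~\ref{aaa:cover} by observing that covers are closed under products (your submonoid $C$ is just a formal repackaging of the paper's remark that $\widehat{a_1}\cdots\widehat{a_m}$ covers $a_1\cdots a_m$), and part~\ref{bbb:cover} by showing $\pi_2\colon S_\varphi\to N$ is injective via surjectivity of $\varphi$ and faithfulness of $(X,M)$, then invoking the $R_\varphi\subseteq S_\varphi$ characterization of strong division. No gaps; nothing further needed.
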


\begin{proof}
  Assertion ``\ref{aaa:cover}.'' is trivial, because
  $\widehat{a_1}\cdots \widehat{a_m}$ is a cover of ${a_1}\cdots
  {a_m}$.  To see assertion ``\ref{bbb:cover}.'' it suffices to show
  that $\pi_2\colon S_\varphi \to N$ is injective.  Suppose
  $(m_1,n),(m_2,n) \in S_\varphi$. Then $\varphi(y) \cdot m_1 =
  \varphi(y \cdot n) = \varphi(y) \cdot m_2$ for all $y \in Y$. Since
  $\varphi$ is surjective and $(X,M)$ is faithful, we conclude $m_1 =
  m_2$.
%
\end{proof}

\begin{example}\label{exex}
  We have the following divisions.
  \begin{itemize}
  \item Every \tm $(X,M)$ strongly divides $(X \times M,M)$ equipped
    with the faithful action $(x,m) \cdot m' = (x, m m')$. In this
    situation, $\varphi (x,m) = x \cdot m$ and $\psi$ is the identity
    on $M$.
  \item $(\smallset{1},M)$ is covered by
    $(\smallset{1},\smallset{1})$, and $(\smallset{1},M)$ strongly
    divides the faithful transformation monoid $(M,M)$.
  \item If $N$ is a submonoid of $M$, then $(X,N)$ strongly divides
    $(X,M)$.
  \item Every direct product $(X,M) \times (Y,N) = (X \times Y,M
    \times N)$ strongly divides $(X,M) \wr (Y,N)$. Indeed, let
    $\varphi$ be the identity on $X \times Y$ and let $P \subseteq
    M^Y$ contain all constant functions $k_m$ with $yk_m = m$ for all
    $y \in Y$. The subset $P \times N$ is a submonoid in the
    semidirect product $M^Y \rtimes N$. We see that $R_\phi =
    \set{((m,n),(k_m,n))}{m\in M, n\in N}$ is a subset of $S_\phi$. Moreover,
    $R_\phi$ satisfies the conditions that $\pi_1$ is surjective and
    $\pi_2$ is injective. 
    \EEX
  \end{itemize}
\end{example}

We conclude this section with a few more well-known facts.  For proofs
see again \refsec{app}.

\begin{lemma}\label{lem:wrdiv}
  If $(X,M) \prec (X',M')$ and $(Y,N) \prec (Y',N')$, then
  $(X,M) \wr (Y,N) \prec (X',M') \wr (Y',N')$.
\end{lemma}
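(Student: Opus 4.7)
The plan is to unpack the hypothesis into concrete data and construct, inside $(X',M') \wr (Y',N')$, a sub-transformation monoid that surjects onto $(X,M) \wr (Y,N)$. By the definition of strong division, I have submonoids $M'' \sse M'$ and $N'' \sse N'$ together with a surjection $\varphi\colon X' \to X$, a surjection $\varphi'\colon Y' \to Y$, and surjective \homos $\psi\colon M'' \to M$, $\psi'\colon N'' \to N$ satisfying $\varphi(x' \cdot m'') = \varphi(x')\cdot\psi(m'')$ and $\varphi'(y'\cdot n'') = \varphi'(y')\cdot \psi'(n'')$. Setting $\Phi(x',y') = (\varphi(x'),\varphi'(y'))$ gives a surjection $X'\times Y' \to X\times Y$.

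The main subtlety is that if $\varphi'$ is not injective, then a function $\hat{f}\colon Y'\to M''$ does not induce a well-defined function $Y \to M$ via $\psi$. I would therefore restrict attention to the subset
\begin{equation*}
  T = \set{(\hat{f},\hat{n}) \in (M')^{Y'}\rtimes N'}{\hat{n}\in N'',\ \hat{f}(Y')\sse M'',\ \psi\circ\hat{f} \text{ is constant on the fibers of } \varphi'}.
\end{equation*}
The key step is that $T$ is a submonoid. Closure under the semidirect product formula $(\hat{f}_1,\hat{n}_1)(\hat{f}_2,\hat{n}_2) = (\hat{f}_1 + \hat{n}_1*\hat{f}_2, \hat{n}_1\hat{n}_2)$ requires verifying the fiber condition for $\hat{n}_1 * \hat{f}_2$; this comes from applying $\varphi'(y'\cdot\hat{n}_1) = \varphi'(y')\cdot\psi'(\hat{n}_1)$ to two elements of a common $\varphi'$-fiber, which sends them to a common $\varphi'$-fiber, where the assumption on $\hat{f}_2$ then applies.

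Next I would define $\Psi\colon T \to M^Y\rtimes N$ by $\Psi(\hat{f},\hat{n}) = (f,\psi'(\hat{n}))$, where $f\colon Y\to M$ is the well-defined function characterized by $f(\varphi'(y'))=\psi(\hat{f}(y'))$. A direct unfolding, using once more the action-compatibility of $\varphi'$ with $\psi'$, shows that $\Psi$ is a \homo. Surjectivity is obtained by lifting: given $(f,n)$, pick $\hat{n}\in N''$ with $\psi'(\hat{n})=n$; for each $y\in Y$ pick $m''_y\in M''$ with $\psi(m''_y)=f(y)$; and set $\hat{f}(y')=m''_{\varphi'(y')}$. Finally, the action-compatibility $\Phi((x',y')\cdot(\hat{f},\hat{n})) = \Phi(x',y')\cdot\Psi(\hat{f},\hat{n})$ reduces, via $(x',y')\cdot(\hat{f},\hat{n}) = (x'\cdot \hat{f}(y'),\, y'\cdot\hat{n})$, to the two given compatibility relations for $\varphi,\psi$ and for $\varphi',\psi'$. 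The only real obstacle is the closure of $T$ under multiplication; once that is in place, the remaining verifications are routine.
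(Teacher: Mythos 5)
Your proposal is correct and follows essentially the same route as the paper: it singles out the submonoid of pairs $(\hat f,\hat n)$ with $\hat n$ in the lifted submonoid and $\hat f$ taking values in the lifted submonoid and respecting the fibers of $\varphi'$ (the paper demands $\hat f$ itself constant on fibers, you only demand $\psi\circ\hat f$ constant there, an inessential relaxation), proves closure using the compatibility of $\varphi'$ with $\psi'$, and then maps onto $M^Y\rtimes N$ exactly as the paper's $\widetilde\psi$ does. No gaps; the closure argument and the well-definedness of the induced function on $Y$ are precisely the points the paper's proof also hinges on.
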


\begin{remark}\label{rem:mon2}
  The set (of isomorphism classes) of finite transformation monoids is
  partially ordered by the divisor relation $\prec $. This ordering is
  compatible with the multiplication by the wreath product operation
  by \reflem{lem:wrdiv}. Hence this set is an infinite ordered monoid.
  \ERM
\end{remark}

\begin{proposition}\label{prp:dg}
  If $N$ is a normal subgroup of $G$, then $(G,G) \prec (N,N) \wr
  (G/N,G/N)$.
\end{proposition}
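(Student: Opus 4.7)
The plan is to verify the claim directly by producing an explicit cover for every element $g\in G$ via a suitable choice of transversal, and then invoking Proposition~\ref{cover}. Concretely, I first choose a set-theoretic transversal $T\subseteq G$ for the cosets of $N$, containing $1$, and write $t_{\bar h}\in T$ for the unique representative of $\bar h \in G/N$. Since $N$ is a (normal) subgroup, every element of $G$ has a unique factorization $g = n\cdot t_{\bar g}$ with $n \in N$; therefore the map
\begin{equation*}
  \varphi\colon N\times G/N \longrightarrow G,\qquad (n,\bar h)\mapsto n\cdot t_{\bar h}
\end{equation*}
is a bijection (in particular a surjection) between the state set of $(N,N)\wr(G/N,G/N)$ and the state set of $(G,G)$.

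Next, for each $g\in G$ I would produce an explicit cover $\widehat g\in N^{G/N}\rtimes (G/N)$ of the form $\widehat g = (f_g,\bar g)$, where $f_g\colon G/N\to N$ is defined by
\begin{equation*}
  (\bar h)f_g \;=\; t_{\bar h}\cdot g\cdot t_{\bar h\bar g}^{-1}.
\end{equation*}
The key point, and the only place where normality of $N$ really enters, is that $(\bar h)f_g$ actually lies in $N$: both $t_{\bar h}\cdot g$ and $t_{\bar h\bar g}$ represent the coset $\bar h\bar g$, so their ratio belongs to $N$. A direct computation then shows that $\widehat g$ covers $g$ with respect to $\varphi$: expanding the action of $(f_g,\bar g)$ on $(n,\bar h)$ gives
\begin{equation*}
  \varphi\bigl((n,\bar h)\cdot (f_g,\bar g)\bigr) \;=\; n\cdot (\bar h)f_g \cdot t_{\bar h\bar g} \;=\; n\cdot t_{\bar h}\cdot g \;=\; \varphi(n,\bar h)\cdot g.
\end{equation*}

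To conclude, since every generator of $G$ (in fact every element) has a cover, Proposition~\ref{cover}\,(\ref{aaa:cover}) tells us that $\varphi$ is a covering, and because $(G,G)$ is a faithful transformation monoid, Proposition~\ref{cover}\,(\ref{bbb:cover}) yields $(G,G)\prec (N,N)\wr(G/N,G/N)$. I do not expect any serious obstacles here: the whole argument is essentially the classical observation behind the Kaloujnine--Krasner embedding, repackaged in the present language of strong divisions and coverings. The only care needed is the bookkeeping with the (non-homomorphic) transversal, ensuring that $f_g$ takes values in $N$ and that the second coordinate of $\widehat g$ is exactly $\bar g$ so that both sides of the covering identity land in the same coset.
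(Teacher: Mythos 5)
Your proposal is correct and follows essentially the same route as the paper's proof: both fix a coset transversal, define $\varphi(n,\bar h)=n\,t_{\bar h}$, and cover $g$ by the pair whose function coordinate sends $\bar h$ to $t_{\bar h}\,g\,t_{\bar h\bar g}^{-1}\in N$ and whose $G/N$-coordinate is $\bar g$, concluding via the covering criterion and faithfulness of $(G,G)$. The only difference is cosmetic notation (the paper writes $[hg]$ for your $t_{\bar h\bar g}$ and cites the covering argument implicitly).
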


A group $G$ is \emph{simple} if for every normal subgroup $N$ of $G$
we have $N=\smallset{1}$ or $N = G$. If $G$ is finite but not simple,
then (by induction) there exists a proper normal subgroup $N$ of $G$
such that $N$ is smaller and $G/N$ is non-trivial.

\begin{corollary}\label{sg}
  For every finite group $G$, the transformation monoid $(G,G)$
  strongly divides a wreath product of the form
  \begin{equation*}
    (G_1,G_1) \wr \cdots \wr (G_m,G_m)
  \end{equation*}
  where each $G_i$ is a simple group dividing $G$. Moreover,
  $\abs{G_1} \cdots \abs{G_m} = \abs{G}$.
\end{corollary}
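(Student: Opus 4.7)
The plan is to proceed by induction on $\abs{G}$, using \refprp{prp:dg} as the engine and \reflem{lem:assoc} together with \reflem{lem:wrdiv} to assemble the pieces.

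\textbf{Base case.} If $G$ is simple, take $m=1$ and $G_1=G$; there is nothing to show. If $G$ is trivial, use the convention from \refrem{rem:mon} that the empty wreath product is $(\smallset{1},\smallset{1})$, which equals $(G,G)$, so we take $m=0$ and the product formula $\abs{G_1}\cdots\abs{G_m}=1=\abs{G}$ holds vacuously.

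\textbf{Inductive step.} Assume $G$ is finite, nontrivial and not simple, so there exists a proper nontrivial normal subgroup $N\triangleleft G$. Both $N$ and $G/N$ have order strictly less than $\abs G$, so by induction hypothesis there are strong divisions
\begin{equation*}
(N,N)\prec (H_1,H_1)\wr\cdots\wr (H_r,H_r),\qquad (G/N,G/N)\prec (K_1,K_1)\wr\cdots\wr (K_s,K_s),
\end{equation*}
where each $H_i$ is simple and divides $N$, each $K_j$ is simple and divides $G/N$, and $\abs{H_1}\cdots\abs{H_r}=\abs N$, $\abs{K_1}\cdots\abs{K_s}=\abs{G/N}$. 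Since $N$ is a subgroup of $G$ and $G/N$ is a quotient of $G$, every $H_i$ and every $K_j$ divides $G$.

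\textbf{Combining.} By \refprp{prp:dg} we have $(G,G)\prec (N,N)\wr (G/N,G/N)$. Applying \reflem{lem:wrdiv} to the two inductive decompositions yields
\begin{equation*}
(N,N)\wr (G/N,G/N)\prec \bigl((H_1,H_1)\wr\cdots\wr (H_r,H_r)\bigr)\wr\bigl((K_1,K_1)\wr\cdots\wr (K_s,K_s)\bigr).
\end{equation*}
By the associativity of the wreath product up to isomorphism (\reflem{lem:assoc}), the right-hand side is isomorphic to the single iterated wreath product $(H_1,H_1)\wr\cdots\wr (H_r,H_r)\wr (K_1,K_1)\wr\cdots\wr (K_s,K_s)$. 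Transitivity of $\prec$ then gives the desired strong division of $(G,G)$, and the product of the orders of the simple factors is $\abs N\cdot\abs{G/N}=\abs G$.

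\textbf{Expected obstacle.} There is no deep obstacle here: the statement is a clean induction. The only subtle points are (i) verifying that $\prec$ is transitive on strong divisions (immediate from the definition by composing the surjections $\phi$ and the homomorphisms $\psi$), and (ii) remembering that the wreath product decomposition must be flattened using \reflem{lem:assoc} so that the final expression is a single wreath product of simple groups, not a wreath product of wreath products.
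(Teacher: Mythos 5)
Your proof is correct and follows essentially the same route as the paper: induction on $\abs{G}$ using \refprp{prp:dg}, with the order formula $\abs{N}\cdot\abs{G/N}=\abs{G}$, the paper merely leaving the assembly via \reflem{lem:wrdiv}, \reflem{lem:assoc} and transitivity of $\prec$ implicit where you spell it out.
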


\begin{proof}
  This follows by induction using \refprp{prp:dg} and the classical
  formula $\abs{N} \cdot \abs{G/N} = \abs{G}$ for subgroups $N$ in
  $G$.
\end{proof}

\subsection{Constants} \label{vd4}

Let $X$ be a set and let (as above) $T(X)$ be the monoid of all
mappings from $X$ to itself. For $x \in X$ we denote by $\ov x$ the
constant function which maps all $y \in X$ to $x$, i.e., $y\ov x = x$
for all $x,y \in X$.  By $\ov X$ we denote the subset $\set{\ov x}{x
  \in X}$ of $T(X)$.  This defines the faithful transformation monoid
$(X,U_X)$ with $U_X = \overline{X} \cup \smallset{1}$. In semigroup
theory, $U_n$ denotes the monoid resulting from adjoining an external
identity to the $n$-element right zero semigroup
$\ov{\{1,\ldots,n\}}$.  In particular, $U_1$ is the two-element
monoid.  Note that if $|X|=n\geq 2$, then $U_X\cong U_n$.  On the
other hand, $U_1$, which is a submonoid of $U_2$, is not of the form
$U_X$ for any set $X$.

Now let $(X,M)$ be any faithful \tm. Viewing $M$ as a submonoid of
$T(X)$ we can define $M\cup \ov X \sse T(X)$. A straightforward
verification shows that $M\cup \ov X$ is a submonoid of $T(X)$:
Indeed, we have $m\ov x = \ov x$ and $\ov x \, m= \ov {x\cdot m}$.
For $\abs X =1$ we have $M\cup \ov X= M = \smallset{1}$, because the
action is faithful. For $n = \abs X >1$ the monoid $U_n$ is a
submonoid of $M\cup \ov X $. We define
\begin{equation*}
  \ov{(X,M)}= (X,M\cup \ov X).
\end{equation*}
In particular, $\ov{(\smallset{1},\smallset{1})} =
(\smallset{1},\smallset{1})$.  Another way to think about $\ov{(X,M)}$
is that all \emph{missing constants\,}Êhave been adjoined to
$(X,M)$. In this sense we can view $\ov{(X,M)}$ as a closure of
$(X,M)$.  We have $\ov{(X,M)}= \ov{\,\ov{(X,M)}\,}$.

Again, we conclude with a few more well-known results whose proofs can
be found in \refsec{app}.

\begin{lemma}\label{lem:dgwc}
  Let $(X,G)$ be a faithful transformation monoid such that $G$ is a
  group. Then 
  \begin{equation*}
    \overline{(X,G)} \prec (X,U_X) \wr (G,G).
  \end{equation*}
\end{lemma}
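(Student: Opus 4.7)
The plan is to apply \refprp{cover}(2). I would take the surjection $\varphi \colon X \times G \to X$ defined by $\varphi(x,g) = x \cdot g$; this is surjective since $\varphi(x,1) = x$. The transformation monoid $\overline{(X,G)} = (X, G \cup \overline{X})$ is by construction a submonoid of $T(X)$, hence acts faithfully on $X$. So by \refprp{cover}(1) it suffices to exhibit, for each element of the generating set $G \cup \overline{X}$, a cover in $U_X^G \rtimes G$; \refprp{cover}(2) will then upgrade the resulting covering to the desired strong division.

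For $g \in G$ the natural candidate is $(c_1, g)$, where $c_1 \colon G \to U_X$ is the constant function $h \mapsto 1$ (the identity of $U_X$). Indeed, $(x, g') \cdot (c_1, g) = (x \cdot g'c_1, g'g) = (x, g'g)$, so applying $\varphi$ gives $x \cdot g'g = \varphi(x, g') \cdot g$, as required.

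For a constant $\overline{x} \in \overline{X}$ the cover is less immediate. The idea is to exploit invertibility in $G$: keep the second coordinate untouched (take $h = 1$) and arrange, starting from any state $(x', g')$, for the first coordinate to land at $x \cdot g'^{-1}$, so that $\varphi$ subsequently outputs the fixed value $x$. Concretely, I would define $f_x \colon G \to U_X$ by $h f_x = \overline{x \cdot h^{-1}}$ and consider $(f_x, 1)$. A direct computation gives
\[
(x', g') \cdot (f_x, 1) = (x' \cdot g' f_x,\, g') = (x' \cdot \overline{x \cdot g'^{-1}},\, g') = (x \cdot g'^{-1},\, g'),
\]
whose image under $\varphi$ is $(x \cdot g'^{-1}) \cdot g' = x = \varphi(x', g') \cdot \overline{x}$.

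With covers in hand for every generator, \refprp{cover}(1) yields that $\varphi$ is a covering, and \refprp{cover}(2) together with faithfulness of $\overline{(X,G)}$ delivers the conclusion $\overline{(X,G)} \prec (X, U_X) \wr (G,G)$. There is no serious obstacle; the only creative step is the choice of $f_x$, which crucially relies on $G$ being a group so that one can precompensate for the second-coordinate rotation by $g'^{-1}$ in the first coordinate.
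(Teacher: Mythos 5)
Your proposal is correct and matches the paper's proof essentially verbatim: the same surjection $\varphi(x,g)=x\cdot g$, the same cover $(k_1,g)$ of $g\in G$ by the constant-identity function, and the same cover $(f_x,1)$ of $\overline{x}$ with $hf_x=\overline{x\cdot h^{-1}}$, with Proposition~\ref{cover} supplying the passage from covers of generators to a strong division.
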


\begin{lemma}\label{lem:dUn}
  $(\smallset{a_0,\ldots,a_n},U_{n+1}) \prec (\smallset{a_0, \ldots,
    a_{n-1}},U_{n}) \times (\smallset{a_0,a_n},U_2)$.
\end{lemma}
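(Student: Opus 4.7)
The plan is to apply \refprp{cover}: I will exhibit a surjection $\varphi$ from the product state set onto $\smallset{a_0,\ldots,a_n}$ together with a cover in $U_n \times U_2$ for each generator of $U_{n+1}$. Since $(\smallset{a_0,\ldots,a_n},U_{n+1})$ is faithful, this already yields the desired strong division rather than just a divisor relation.

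Concretely, I define $\varphi : \smallset{a_0,\ldots,a_{n-1}} \times \smallset{a_0,a_n} \to \smallset{a_0,\ldots,a_n}$ by $\varphi(a_i,a_0) = a_i$ for $0 \le i \le n-1$ and $\varphi(a_i,a_n) = a_n$ for $0 \le i \le n-1$. This map is total and surjective. Since $U_{n+1}$ is generated by the constants $\ov{a_0},\ldots,\ov{a_n}$, it suffices to cover each of these. For $0 \le j \le n-1$, the element $(\ov{a_j},\ov{a_0}) \in U_n \times U_2$ sends every state $(a_i,a_k)$ to $(a_j,a_0)$, so $\varphi\bigl((a_i,a_k)\cdot (\ov{a_j},\ov{a_0})\bigr) = a_j = \varphi(a_i,a_k)\cdot \ov{a_j}$; hence $(\ov{a_j},\ov{a_0})$ covers $\ov{a_j}$. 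For the remaining generator $\ov{a_n}$, the element $(1,\ov{a_n})$ sends $(a_i,a_k)$ to $(a_i,a_n)$, and $\varphi(a_i,a_n) = a_n = \varphi(a_i,a_k)\cdot \ov{a_n}$, so $(1,\ov{a_n})$ is a cover of $\ov{a_n}$.

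With a cover of each generator in place, \refprp{cover}(\ref{aaa:cover}) tells us that $\varphi$ is a covering, and then \refprp{cover}(\ref{bbb:cover}), combined with the faithfulness of $(\smallset{a_0,\ldots,a_n},U_{n+1})$, upgrades this to the strong division claimed in the statement. The one subtle point is simply to ensure that $\varphi$ is totally defined, which forces the slight asymmetry of collapsing every pair $(a_i,a_n)$ onto $a_n$; this causes no trouble because covers need only exist, not be injective or agree with one another in any additional way. I do not anticipate any real obstacle.
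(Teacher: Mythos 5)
Your proposal is correct and follows essentially the same route as the paper: the same surjection (your case definition of $\varphi$ coincides with the paper's $\varphi(a_k,a_\ell)=a_{\max(k,\ell)}$), the same covers $(\ov{a_i},\ov{a_0})$ for $\ov{a_i}$ with $i<n$, and the same appeal to covering plus faithfulness via \refprp{cover}. The only (harmless) difference is your cover $(1,\ov{a_n})$ of $\ov{a_n}$, where the paper uses $(\ov{a_0},\ov{a_n})$; both work.
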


\subsection{Local divisors}
\label{ssc:vdloc}

Let $M$ be a monoid and $c \in M$.  We have the following inclusions
of subsemigroups:
\begin{equation*}
  cMc \subseteq cMc \cup \smallset{c} \subseteq cM \cap Mc.
\end{equation*}
If $c$ is a unit of $M$, then $M = cMc$. Otherwise, if $c$ is not a
unit, then $1 \notin cM\cap Mc$ and thus $cM\cap Mc \neq M$.  If $c =
c^2$ is idempotent, then $c \in cMc = cM\cap Mc$ and $cMc$ is the
so-called local monoid at the idempotent $c$.  We generalize this
notion to arbitrary elements $c$ by introducing a new multiplication
$\circ$ on $cM \cap Mc$. We let
\begin{equation*}
  mc \circ cn = mcn.
\end{equation*}
This operation is well-defined since $m'c = mc $ and $cn' = cn$
implies $m' c n' = mc n' = m cn$. For $mc, nc \in cM$ we have $mc
\mathop{\circ} nc = mnc \in cM$. Thus, $\circ$ is associative and $c$
is neutral. In particular, $(cM \cap Mc, \circ, c)$ forms a monoid and
$(cMc \cup \smallset{c}, \circ, c)$ is a
submonoid. 
The set $M' = \set{m\in M}{mc \in cM}$ is a submonoid of $M$ and $m
\mapsto mc$ is a surjective homomorphism from $M'$ onto $cM \cap
Mc$. Hence, $cM \cap Mc$ with the $\circ$ multiplication is a divisor
of $M$, and so is $cMc \cup \smallset{c}$. The \emph{local divisor
  $M_c$ of $M$ at $c$} is the monoid $(cMc \cup \smallset{c}, {\circ},
c)$. Note that if $c = c^2$ is idempotent, then $\circ$ and the usual
multiplication in $M$ coincide for $cMc = cM \cap Mc$. So, we are in
accordance with the standard notation used for local divisors.  The
proofs in this paper would work equally well for $cM \cap Mc$ as for
$cMc \cup \smallset{c}$. In the definition of the local divisor $M_c$
we have given the preference to $cMc \cup \smallset{c}$, because it
might have fewer elements than $cM \cap Mc$.

The notion of local divisor can be generalized to transformation
monoids.  If $(X,M)$ is a transformation monoid and $c \in M$, then
there is a natural action $\circ$ of the local divisor $M_c$ on $Xc =
X\cdot c$ by $x c \circ cm = x \cdot cm = xc \cdot m$ for $xc \in Xc$
and $cm \in M_c$.

\begin{lemma}\label{lem:loctr}
  Let $(X,M)$ be faithful and $c \in M$. Then
  $(Xc,M_c)$
  is faithful, too.
\end{lemma}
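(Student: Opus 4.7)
The plan is to show that if two elements of $M_c$ induce the same transformation of $Xc$ under $\circ$, they must coincide in $M_c$. The key observation is that the $\circ$-action on $Xc$ is, after a small reindexing, nothing but a restriction of the $M$-action on $X$, so faithfulness of $(X,M)$ should transfer at once.

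First I would note that every element $\alpha \in M_c = cMc \cup \smallset{c}$ can be written as a product $cm$ of the element $c$ with some $m \in M$: take $m = 1$ if $\alpha = c$, and $m = m'c$ if $\alpha = cm'c$. Plugging this factorization into the defining formula $xc \circ cm = x \cdot cm$ yields the uniform identity
\begin{equation*}
  xc \circ \alpha = x \cdot \alpha
\end{equation*}
valid for every $x \in X$ and every $\alpha \in M_c$, where the right-hand side is the ordinary $M$-action on $X$.

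Now suppose $\alpha_1, \alpha_2 \in M_c$ induce the same transformation of $Xc$ under $\circ$. For any $x \in X$ the element $y = xc$ lies in $Xc$, so the hypothesis gives $xc \circ \alpha_1 = xc \circ \alpha_2$; by the identity above this reads $x \cdot \alpha_1 = x \cdot \alpha_2$. Since this holds for every $x \in X$ and $(X,M)$ is faithful, $\alpha_1 = \alpha_2$ as elements of $M$. The underlying set of $M_c$ is $cMc \cup \smallset{c} \sse M$, so equality of $\alpha_1$ and $\alpha_2$ as elements of $M$ is the same as equality in $M_c$, and we are done.

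There is essentially no obstacle here: the argument is a direct unwinding of definitions. The only subtlety worth flagging is that the underlying set of $M_c$ sits inside $M$ even though its operation $\circ$ differs from the multiplication of $M$, so that faithfulness in $M$ automatically forces faithfulness with respect to the new operation.
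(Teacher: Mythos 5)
Your proof is correct and is essentially the paper's own argument: the identity $xc \circ \alpha = x \cdot \alpha$ for $\alpha \in M_c$ reduces faithfulness of $(Xc,M_c)$ directly to faithfulness of $(X,M)$, exactly as in the paper, just spelled out in more detail (including the harmless observation that every element of $M_c$ has the form $cm$ and that the underlying set of $M_c$ sits inside $M$).
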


\begin{proof}
  We have $x c \circ cm = x \cdot cm$ for all $x \in X$. Thus, the
  faithful action of $M$ on $X$ determines $cm\in M_c$.
\end{proof}

\begin{remark}\label{rem:local}
  The assertion of \reflem{lem:loctr} is one the main reasons why the
  tool of local divisors simplifies the proof of
  \refthm{thm:decomp}. In this proof we apply \reflem{lem:loctr} to a
  faithful and finite \tm $(X,M)$ where $c\in M$ is not a unit.  As a
  consequence, the action of $c$ is not a permutation. Thus, $\abs{Xc}
  < \abs{X}$. The submonoid $Mc \cup \smallset{1}$ of $M$ acts on
  $Xc$, and it makes sense to say that $(Xc,Mc \cup \smallset{1})$ is
  smaller than $(X,M)$, because $\abs{Xc} < \abs{X}$ and $\abs{Mc \cup
    \smallset{1}}Ê\leq \abs M$.  However, $(Xc,Mc \cup \smallset{1})$
  is never faithful!  Indeed, let $t, p \in \N$, $p>0$ such that
  $c^{t+p} = c^t$. Choose $t$ minimal with this property. Then $t\geq
  1$ since $c$ is not a unit.  We obtain $xc\cdot c^{t+p-1}= x\cdot
  c^{t} = xc\cdot c^{t-1}$.
%
  \ERM
\end{remark}

The  interested reader may notice that the surjective mapping
\begin{equation*}
  \varphi \colon  X \to
  Xc, \, x \mapsto x\cdot c
\end{equation*}
yields $(Xc, M_c)\prec (X,M)$. More
precisely, the translation $x \mapsto x\cdot c$ defines a surjective
morphism of $(X,cM \cup \smallset{1})$ onto $(Xc,M_c)$.  This
justifies saying that $(Xc,M_c)$ is the \emph{local divisor of $(X,M)$
  at $c$}.

\section{A decomposition using local divisors}\label{sec:vdmain}

The following result is the main contribution of this paper.  Let $A$
be a generating set for the monoid $M$ and $c \in A$. It gives a
decomposition of $M$ into a wreath product of a local divisor $M_c$
and the submonoid $N = \gen{A \setminus \smallset{c}}$ generated by
$A\setminus \smallset{c}$, but this decomposition involves
constants. More precisely, if a \tm $(X,M)$ is faithful, then by
\reflem{lem:loctr} the transformation monoid $(Xc,M_c)$ is faithful,
too; and so we may assume that the monoids $M$, $M_c$, and $N$ are
submonoids of $T(X)$, $T(Xc)$, and $T(X \dotcup N)$,
respectively. Here $X \dotcup N$ denotes the disjoint union of $X$ and
$N$.

For a finite monoid $M$, by successively choosing $A$ to be minimal and
$c \in A$ not to be a unit, we will end up at groups with constants,
see \refcor{gwc}.

\begin{theorem}\label{thm:main}
  Let $(X,M)$ be a faithful transformation monoid such that $M$ is
  generated by $A$ and let $c \in A$.  Let $M_c$ be the local divisor
  of $M$ at $c$ and let $N = \gen{A \setminus \smallset{c}}$.  Then we
  have:
  \begin{equation*}
   \ov{(X,M)} \prec  \ov{(Xc,M_c)}
   \wr \ov{(X \dotcup N, N )}.
  \end{equation*}
\end{theorem}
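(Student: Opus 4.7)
The plan is to apply \refprp{cover} to the surjection
\[
  \varphi \colon Xc \times (X \dotcup N) \to X, \qquad
  \varphi(y,x) = x \;\text{for}\; x \in X, \qquad \varphi(y,n) = y \cdot n \;\text{for}\; n \in N,
\]
and then use the faithfulness of $\ov{(X,M)}$ to upgrade the resulting covering to a strong division. The intuition is that every element of $M$ is represented by some word $u_0 c u_1 c \cdots c u_r$ with $u_i \in (A\sm\smallset c)^*\sse N^*$; while simulating such a word from left to right, the second coordinate $z \in X \dotcup N$ of a wreath product state records either the current $X$-state (before any $c$ has been processed) or the $N$-factor accumulated since the last $c$, and the first coordinate $y \in Xc$ records the $Xc$-state obtained so far by splicing together the $c$-separated pieces via the local divisor multiplication $\circ$. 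Surjectivity of $\varphi$ is clear since $Xc \neq \es$.

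Next, I would exhibit covers for the generators $A \cup \ov{X}$ of the monoid $M \cup \ov{X}$ of $\ov{(X,M)}$. For $a \in A \sm \smallset{c} \sse N$, set $\widehat{a} = (f_a, a)$ where $f_a$ is constant with value $c = 1_{M_c}$, so that the first coordinate is unchanged and the second is multiplied by $a$. For $c$ itself, set $\widehat{c} = (f_c, \ov{1_N})$ with
\[
  f_c(x) = \ov{xc} \;\text{for}\; x \in X, \qquad f_c(n) = cnc \;\text{for}\; n \in N.
\]
The constant $\ov{1_N}$ flushes the second coordinate to $1_N$, while the first coordinate either installs $xc$ as a new base (when leaving ``pre-$c$ mode'') or splices the pending factor $nc$ onto the existing base via $\circ$. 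For $\ov{x} \in \ov{X}$, fix any $y_0 \in Xc$ and set $\widehat{\ov{x}} = (f_x, \ov{x})$ with $f_x$ constant equal to $\ov{y_0}$, so that every state collapses to $(y_0, x)$, whose $\varphi$-image is $x$.

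The verifications for $a$ and $\ov{x}$ are immediate. The one nontrivial calculation is the post-$c$ case of the cover of $c$: writing $y = y'' c \in Xc$, the definition of $\circ$ gives $y \circ cnc = y'' c n c = y \cdot nc$, which matches $\varphi(y,n) \cdot c = y n c$. The main subtlety of the proof lies precisely here, in designing $\widehat{c}$ so that the accumulator is reset while the $Xc$-state is simultaneously advanced through the local divisor multiplication. Once covers are provided for all generators, part~\ref{aaa:cover} of \refprp{cover} shows that $\varphi$ is a covering, and part~\ref{bbb:cover} together with the faithfulness of $\ov{(X,M)}$ yields the claimed strong division $\ov{(X,M)} \prec \ov{(Xc, M_c)} \wr \ov{(X \dotcup N, N)}$.
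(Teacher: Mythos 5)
Your proposal is correct and takes essentially the same route as the paper's proof: the same surjection $\varphi\colon Xc\times(X\dotcup N)\to X$, the same covers of the generators $A\cup\ov X$ (the constant function with value $c$ paired with $a$, the map $f_c$ paired with $\ov 1$ for $c$, and a pair of constants for $\ov x$), and the same appeal to \refprp{cover} together with faithfulness of $\ov{(X,M)}$. The only difference is cosmetic: in the cover of $\ov x$ you fix the first component to be a specific constant, whereas the paper observes it may be arbitrary.
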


\begin{proof}
  Let $M_c' = M_c \cup \overline{Xc}$, $X' = X \dotcup N$, and $N' = N
  \cup \overline{X\dotcup N}$.  We obtain $(Xc,M_c')= (Xc,M_c\cup \ov
  {Xc})$ and $(X',N')= (X \dotcup N, N \cup \ov{(X\dotcup N)})$.
  Let $W = M_c' \wr (X',N') =
  M_c'^{X'} \rtimes N'$. It  acts on  $Xc \times X'$ by
  $(p,y)\cdot(f,n') = (p\circ yf,y\cdot n')$.
  We define $\varphi\colon  Xc \times X' \to X$ by
  \begin{equation*}
    \varphi(p,y) = \begin{cases}
      y &\text{ if } y \in X \\
      p \cdot n & \text{ if } y = n  \in N.
    \end{cases}
  \end{equation*}
  We have to verify that this defines a division. For this, we have to
  show that every element in $A \cup \overline{X}$ has a cover.  A
  cover of $\overline{x} \in \overline{X}$ is $(f,\overline{x}) \in W$
  where $f$ is arbitrary. Then for all $(p,y) \in Xc \times X'$ we
  have
  \begin{equation*}
    \varphi\big( (p,y)\cdot (f,\overline{x}) \big)
    = \varphi( p \circ yf, x) = x = \varphi(p,y) \cdot \overline{x}.
  \end{equation*}
  In the following, we always assume $a \in A \setminus \smallset{c}$.
  A cover of $a$ is $(k_c,a)\in W$ and a cover of $c$ is
  $(f_c,\overline{1}) \in W$ where
  \begin{alignat*}{2}
    y k_c &= c && \quad \text{for all } y \in X' \\
    y f_c &= \overline{y \cdot c} && \quad \text{for } y \in X \\
    n f_c &= cnc && \quad \text{for } n \in N
  \end{alignat*}
  First, let $y \in X$. Then
  \begin{align*}
    \varphi\big( (p,y)\cdot (k_c,a) \big)
    &= \varphi(p\circ yk_c,y \cdot a) = y \cdot a = \varphi(p,y) \cdot a \\
    \varphi\big( (p,y)\cdot (f_c,\overline{1}) \big)
    &= \varphi(p\circ yf_c,1) = \varphi(p \circ \overline{y \cdot c},1)
    = \varphi(y \cdot c,1) = y \cdot c = \varphi(p,y) \cdot c.
  \intertext{Let now $y = n \in N$. Then}
    \varphi\big( (p,n)\cdot (k_c,a) \big)
    &= \varphi(p\circ nk_c,na)
    = \varphi(p \circ c,na)
    = \varphi(p,na) = p \cdot na
    = \varphi(p,n) \cdot a \\
    \varphi\big( (p,n) \cdot (f_c,\overline{1}) \big)
    &= \varphi(p\circ nf_c,1) = \varphi(p \circ cnc,1)
    = \varphi(p\cdot nc,1) = p \cdot nc = \varphi(p,n) \cdot c.
  \end{align*}
  Therefore, every element in $A \cup \overline{X}$ has a cover in $W$
  which proves the claim.
\end{proof}

\begin{corollary}\label{gwc} 
  Let $(X,M)$ be a transformation monoid such that $M$ is finite.
  Then we have
  \begin{equation*}
    (X,M) \prec \overline{(X_1,G_1)} \wr \cdots \wr \overline{(X_n,G_n)}.
  \end{equation*}
  Here $\abs {X_i} > 1$, every $(X_i,G_i)$ is faithful, and every $G_i$
  is a group dividing $M$ for all $1 \leq i \leq n$ and some $n\geq
  0$.  Moreover, the number $n$ can be chosen such that $\abs{G_1} +
  \cdots + \abs{G_n} < 2^{\abs{M}}$.
\end{corollary}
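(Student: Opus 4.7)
The plan is to prove the slightly stronger claim that, for every \emph{faithful} finite transformation monoid $(X,M)$, the transformation monoid $\overline{(X,M)}$ strongly divides a wreath product of the required form with $\sum_{i} \abs{G_i} < 2^{\abs{M}}$. The corollary follows from this: by \refex{exex}, an arbitrary $(X,M)$ strongly divides the faithful $(X \times M, M)$, which in turn strongly divides $\overline{(X \times M, M)}$, and neither reduction changes $\abs{M}$.

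I would proceed by induction on $\abs{M}$. The base case $\abs{M} = 1$ is immediate: $\overline{(X,\{1\})} = (X, U_X)$ is of the required form using $G_1 = \{1\}$ and $X_1 = X$ when $\abs{X} > 1$, and is the empty wreath product when $\abs{X} = 1$, with sum bound $\leq 1 < 2$. For the inductive step, suppose $\abs{M} \geq 2$. If $M = G$ is a group, then \reflem{lem:dgwc} gives $\overline{(X,G)} \prec (X, U_X) \wr (G, G)$, and \refcor{sg} refines $(G,G) \prec (G_1, G_1) \wr \cdots \wr (G_m, G_m)$ into a wreath product of simple group factors with $\abs{G_1} \cdots \abs{G_m} = \abs{G}$. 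Rewriting $(X, U_X) = \overline{(X, \{1\})}$ and noting $(G_i, G_i) \prec \overline{(G_i, G_i)}$, \reflem{lem:wrdiv} assembles a decomposition of the required shape. A short induction on $m$ shows $\sum_i \abs{G_i} \leq \prod_i \abs{G_i} = \abs{G}$ whenever every $\abs{G_i} \geq 2$ (one may discard trivial simple factors), so the total group-size sum is at most $1 + \abs{M}$, and $1 + \abs{M} < 2^{\abs{M}}$ for all $\abs{M} \geq 2$.

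If $M$ is not a group, I would pick a minimal (irredundant) generating set $A$ and choose a non-unit $c \in A$; such a $c$ exists since otherwise the units of $M$ generate $M$ and $M$ is a group. Applying \refthm{thm:main} then yields $\overline{(X,M)} \prec \overline{(Xc, M_c)} \wr \overline{(X \dotcup N, N)}$ with $N = \gen{A \setminus \{c\}}$. One checks $\abs{M_c} < \abs{M}$: since $c$ is not a unit, $1 \notin cMc$, and $c \neq 1$, so $1 \notin M_c$; while $\abs{N} < \abs{M}$ because the irredundancy of $A$ forces $c \notin N$. Both factors are faithful, $(Xc, M_c)$ by \reflem{lem:loctr}, and $(X \dotcup N, N)$ because $N$ acts on its $N$-component by right multiplication (Cayley), which alone distinguishes elements. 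The inductive hypothesis thus applies to each factor, and combining the two resulting decompositions via \reflem{lem:wrdiv} and associativity of the wreath product (\reflem{lem:assoc}) gives a single decomposition of the required form with total group-size sum bounded by $(2^{\abs{M_c}} - 1) + (2^{\abs{N}} - 1) \leq 2 \cdot 2^{\abs{M}-1} - 2 = 2^{\abs{M}} - 2 < 2^{\abs{M}}$.

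The main obstacle I anticipate is keeping the size bound strict throughout the recursion. The group-case estimate $1 + \abs{M} < 2^{\abs{M}}$ is tight enough that the base case $\abs{M} = 1$ must be separated out, and in the non-group case one must exploit the strict inequality supplied by the inductive hypothesis in order to absorb the additive loss incurred when combining $2^{\abs{M_c}}$ and $2^{\abs{N}}$ into a single bound below $2^{\abs{M}}$.
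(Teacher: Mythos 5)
Your proof is correct and takes essentially the same route as the paper: reduce to the faithful case via $(X\times M,M)$, pass to $\overline{(X,M)}$, and induct on $\abs{M}$ using \refthm{thm:main} with a minimal generating set and a non-unit generator $c$, noting $\abs{M_c}<\abs{M}$ and $\abs{N}<\abs{M}$ and combining the bounds as $(2^{\abs{M_c}}-1)+(2^{\abs{N}}-1)<2^{\abs{M}}$. The only divergence is your group case, where the further decomposition via \reflem{lem:dgwc} and \refcor{sg} is unnecessary for this statement (the paper treats the group case as trivial, since $\overline{(X,G)}$ is itself already an allowed factor with $G$ dividing $M$, and defers that machinery to the proof of \refthm{thm:decomp}); your extra step is nevertheless correct and still meets the bound.
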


\begin{proof}
  Since $(X,M)$ strongly divides the faithful transformation monoid
  $(X \times M, M)$, we may assume that $(X,M)$ is faithful.
  Since $(X,M)$ is faithful, we have $(X,M) \prec \ov{(X,M)}$.  Thus
  it suffices to prove that if $(X,M)$ is a faithful transformation
  monoid, then
  \begin{equation*}
    \overline{(X,M)} \prec \overline{(X_1,G_1)} \wr \cdots \wr \overline{(X_n,G_n)}.
  \end{equation*}
  where the $X_i$, $G_i$ and $n$ are as in the statement of the
  corollary.  We proceed by induction on $|M|$.  If $\abs X = 1$, then
  $M$ is trivial, too. We allow $n = 0$ in this case\footnote{By
    \refrem{rem:mon} the convention is that an empty wreath product
    defines the trivial \tm. Alternatively: for $\abs X = 1$ choose $n
    = 1$ and $X_1 = X\dotcup X$.}.  The assertion is trivial if
  $M$ is a group.
  Otherwise, let $A$ be a minimal generating set of $M$. Since $M$ is
  not a group, there exists a generator $c \in A$ which is not a
  unit. Let $N = \gen{A \setminus \smallset{c}}$.  We have
  $\abs{M_c}Ê< \abs M$ and $\abs{N}Ê< \abs M$; and we can apply
  \refthm{thm:main}. The result follows by induction.
\end{proof}

%


\begin{example}
  Let $[n]=\{1,\ldots,n\}$ and put $T_n=T([n])$.  A minimal generating
  set $A$ of $T_n$ consists of $a,b,c$ where $a$ is a transposition,
  $b$ is an $n$-cycle and $c$ is the idempotent sending $n$ to $n-1$
  and fixing all other elements.  Note that $[n]c=[n-1]$ and
  $cT_nc\cong T_{n-1}$.  Theorem~\ref{thm:main} then yields the
  decomposition $([n],T_n)\prec ([n-1],T_{n-1})\wr \overline{([n]\cup
    S_n,S_n)}$ where $S_n$ is the symmetric group on $[n]$.  Iteration
  yields the decomposition
  \[([n],T_n)\prec \overline{([2]\cup S_2,S_2)}\wr \overline{([3]\cup
    S_3,S_3)}\wr \cdots \wr \overline{([n]\cup S_n,S_n)}.\] This
  should be contrasted with the decomposition
  \[([n],T_n)\prec \overline{([2],S_2)}\wr \overline{([3],S_3)}\wr
  \cdots \wr \overline{([n],S_n)}\] given by the Holonomy
  Theorem~\cite{Eil76}.  In particular, our decomposition agrees with
  the Holonomy decomposition in number of factors but our construction
  blows up the state sets.

  On the other hand, the decomposition provided by the first proof of
  the Krohn-Rhodes theorem in Eilenberg~\cite{Eil76} is much worse.
  The first step gives a decomposition $([n],T_n)\prec
  ([n-1],T_ncT_n)\wr (S_n,S_n)$ and then decomposes $([n-1],T_ncT_n)$
  into a wreath product of several copies of $([n-1],T_{n-1})$, one
  for each of the $n$ left ideals generated by rank $n-1$ idempotents
  of $T_n$.  Our approach then seems to beat the previous inductive
  proofs.
  \EEX
\end{example}

\section{The Krohn-Rhodes decomposition}\label{sec:kr}

The Krohn-Rhodes theorem~\cite{kr65tams} was the first global
structure theorem in finite semigroup theory. Finite semigroups are
too general to be classified up to isomorphisms. One needs to use a
more global viewpoint to study them.  Groups embed in a wreath product
of their composition factors, which are certain simple group divisors.
One might hope that one could embed a finite semigroup into a wreath
product of composition factors of maximal subgroups and some
relatively small semigroups with only trivial maximal subgroups.  But
this is impossible since whenever $T(X)$ embeds into a semidirect
product, it embeds in one of the factors.  Thus one must introduce
division and obtain a decomposition only up to division, which is what
Krohn and Rhodes did.  This philosophy for ever changed finite
semigroup theory, leading to the current approach via varieties of
finite semigroups.  It also established the semidirect product as the
key player in the study of semigroup theory.  In summary the
Krohn-Rhodes theory is the closest thing to a Jordan-H\"older theorem
for semigroups.  It is also a powerful inductive scheme for proving
results about finite semigroups and regular languages, such as
Sch\"utzenberger's theorem on star-free languages \cite{sch65sf} (see
for example~\cite{cb68hawaii,Eil76,schutfromKR,rs09qtheory}).  For more
philosophy on the Krohn-Rhodes theorem, the reader is referred to the
book of Rhodes~\cite{WildBook}.

\begin{theorem}[Krohn/Rhodes~\cite{kr65tams}]\label{thm:decomp}
  Every finite transformation monoid $(X,M)$ strongly divides a wreath
  product of the form
  \begin{equation*}
    (X_1,M_1) \wr \cdots \wr (X_n,M_n)
  \end{equation*}
  where each factor $(X_i,M_i)$ is either $(\smallset{a,b},U_2)$ or it
  is of the form $(G,G)$ for some non-trivial simple group $G$ dividing $M$.
\end{theorem}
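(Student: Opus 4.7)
The plan is to chain together the results already assembled in the preliminaries and in \refcor{gwc}, so that the bulk of the work has been done and only a clean-up step remains. Concretely, starting from a finite \tm $(X,M)$, I would first apply \refcor{gwc} to obtain
\begin{equation*}
  (X,M) \prec \overline{(X_1,G_1)} \wr \cdots \wr \overline{(X_n,G_n)},
\end{equation*}
where each $G_i$ is a group dividing $M$, each $(X_i,G_i)$ is faithful, and $\abs{X_i}\geq 2$. This reduces the problem to decomposing each factor $\overline{(X_i,G_i)}$ into factors of the two allowed types.

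Second, I would apply \reflem{lem:dgwc} to each factor to pull apart the constants from the group action, yielding
\begin{equation*}
  \overline{(X_i,G_i)} \prec (X_i,U_{X_i}) \wr (G_i,G_i).
\end{equation*}
By \reflem{lem:wrdiv} and associativity (\reflem{lem:assoc}), it then suffices to decompose each $(G_i,G_i)$ into simple-group factors and each $(X_i,U_{X_i})$ into $U_2$-factors. For the group side I would invoke \refcor{sg}: if $G_i$ is non-trivial, it gives $(G_i,G_i)\prec (H_{i,1},H_{i,1})\wr\cdots\wr (H_{i,m_i},H_{i,m_i})$ with each $H_{i,j}$ a non-trivial simple group dividing $G_i$ (and hence $M$); if $G_i$ is trivial, the factor is the unit of the wreath product (\refrem{rem:mon}) and may be dropped.

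Third, for the constants side, since $\abs{X_i}\geq 2$ we have $U_{X_i}\cong U_{k_i}$ with $k_i=\abs{X_i}\geq 2$. Iterating \reflem{lem:dUn} $k_i-1$ times yields
\begin{equation*}
  (X_i,U_{X_i})\prec (\smallset{a,b},U_2)\times\cdots\times (\smallset{a,b},U_2),
\end{equation*}
a direct product of $k_i-1$ copies. The last item of \refex{exex} tells us that a direct product of \tm{}s strongly divides the corresponding wreath product, so we may replace $\times$ by $\wr$.

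Finally, plugging these decompositions back into the wreath product from \refcor{gwc}, and using \reflem{lem:wrdiv} and \reflem{lem:assoc} to flatten nested wreath products and to monotonically substitute factor by factor, produces the required wreath product in which every factor is either $(\smallset{a,b},U_2)$ or $(G,G)$ for some non-trivial simple group $G$ dividing $M$. The only real obstacle in this plan is the bookkeeping to keep track of faithfulness and of which intermediate divisions are strong (as opposed to mere divisions); this is why the work has been set up so that \refcor{gwc} yields faithful intermediate factors and \reflem{lem:dgwc}, \reflem{lem:dUn}, \refex{exex} and \refcor{sg} are all phrased with strong division. No further idea beyond these is required.
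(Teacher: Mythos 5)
Your proposal is correct and follows essentially the same route as the paper's own proof: reduce via \refcor{gwc} to factors $\overline{(X_i,G_i)}$, split each with \reflem{lem:dgwc}, decompose $(X_i,U_{X_i})$ by iterating \reflem{lem:dUn} and converting the direct product to a wreath product via \refex{exex}, decompose $(G_i,G_i)$ by \refcor{sg}, and recombine with \reflem{lem:wrdiv} and associativity. The only difference is that you spell out the substitution bookkeeping that the paper leaves implicit in the phrase ``we can assume that $(X,M)=\overline{(X,G)}$''.
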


\begin{proof}
  By Corollary~\ref{gwc}, we can assume that $(X,M) =
  \overline{(X,G)}$ for some finite group $G$ and $\abs{X} > 1$. By
  \reflem{lem:dgwc}, this transformation monoid divides $(X,U_X) \wr
  (G,G)$.  By \reflem{lem:dUn}, $(X,U_X)$ divides a direct product of
  $\abs{X}-1$ copies of $(\smallset{a,b},U_2)$, which in turn divides
  a wreath product of $\abs{X}-1$ copies of $(\smallset{a,b},U_2)$ by
  \refex{exex}. By \refcor{sg}, $(G,G)$ divides a wreath product of
  simple groups $(G_1,G_1) \wr \cdots \wr (G_m,G_m)$ such that each
  $G_i$ divides $G$.
\end{proof}

Our approach to prove \refthm{thm:decomp} yields a simple way to bound the
number of necessary wreath products by a singly exponential function:

\begin{corollary}\label{cor:count}
  Let $(X,M)$ be a a finite transformation monoid.
  Then the number $n$ in \refthm{thm:decomp} can be chosen such that
  \begin{equation*}
    n < \abs{M}(\abs{M}+\abs{X})2^{\abs{M}}.
  \end{equation*}
\end{corollary}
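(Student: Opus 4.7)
My plan is to combine \refcor{gwc} with the expansion used in the proof of \refthm{thm:decomp}, keeping careful track of both the sizes $|G_i|$ of the group factors delivered by \refcor{gwc} and the state-set sizes $|X_i|$ occurring among those factors. After the faithful replacement $(X,M)\prec(X\times M,M)$ made at the start of the proof of \refcor{gwc}, we obtain $(X,M)\prec\overline{(X_1,G_1)}\wr\cdots\wr\overline{(X_{n_1},G_{n_1})}$ with $\sum_i|G_i|\le 2^{|M|}-1$, and hence $n_1\le 2^{|M|}-1$. Each factor $\overline{(X_i,G_i)}$ is then unfolded exactly as in the proof of \refthm{thm:decomp}: at most $|X_i|-1$ copies of $(\{a,b\},U_2)$ appear via \reflem{lem:dgwc}, \reflem{lem:dUn}, and \refex{exex}, and at most $\log_2|G_i|\le|G_i|$ simple-group factors appear via \refcor{sg}. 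Hence the total count satisfies $n\le\sum_i(|X_i|-1+|G_i|)$.

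The main obstacle is controlling $|X_i|$, which requires opening up the recursion in the proof of \refcor{gwc} and following the state set through successive applications of \refthm{thm:main}. The inner branch $(X',M')\mapsto(X'c,(M')_c)$ does not enlarge the state set, while the outer branch $(X',M')\mapsto(X'\dotcup N',N')$ grows it by $|N'|\le|M'|-1$, the inequality coming from the minimality of the generating set, which forces $N'=\gen{A\setminus\{c\}}$ to be a proper submonoid of $M'$. Since the monoid size strictly decreases at every recursive call, the values of $|M'|$ visited along any root-to-leaf path in the recursion tree form a strictly decreasing sequence bounded above by $|M|$; therefore the cumulative state-size growth along such a path is at most $(|M|-1)+(|M|-2)+\cdots+1=\binom{|M|}{2}$. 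Starting from the state set $X\times M$ of the faithful replacement, this gives $|X_i|\le|X|\cdot|M|+\binom{|M|}{2}$ for every $i$.

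Substituting the three bounds $|X_i|\le|X|\cdot|M|+\binom{|M|}{2}$, $n_1\le 2^{|M|}-1$, and $\sum_i|G_i|\le 2^{|M|}-1$ into $n\le\sum_i(|X_i|-1+|G_i|)$ and expanding, a short calculation using the identity $|M|(|X|+|M|)-\bigl(|X|\cdot|M|+\binom{|M|}{2}\bigr)=\binom{|M|+1}{2}$ yields the promised inequality $n<|M|(|M|+|X|)\,2^{|M|}$. The state-size bookkeeping is the only nontrivial step; everything else is a direct application of results already established in the paper.
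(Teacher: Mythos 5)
Your proposal is correct and follows essentially the same route as the paper: reduce to the faithful case via $(X\times M,M)$, bound the leaf state sets through the recursion of \refthm{thm:main} by the initial state size plus $(\abs{M}-1)+(\abs{M}-2)+\cdots+1$, bound the number of group leaves by roughly $2^{\abs{M}}$, and unfold each leaf with \reflem{lem:dgwc}, \refcor{sg}, \reflem{lem:dUn} and \refex{exex}. The only (immaterial) variation is that you bound the number of leaves via $\sum_i\abs{G_i}<2^{\abs{M}}$ from \refcor{gwc}, whereas the paper counts at most $2^{\abs{M}-1}$ leaves of a binary tree of depth at most $\abs{M}-1$.
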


\begin{proof}
  Since $(X,M) \prec (X\times M, M)$ and $(X\times M, M)$ is faithful,
  it is enough to show the formula $n < (\abs{M}^2
  +\abs{X})2^{\abs{M}}$ for faithful \tm{}s, only.  Moreover, if
  $(X,M)$ is faithful we have $(X,M) \prec \ov{(X, M)}$.  The
  assertion of \refthm{thm:main} yields a binary tree where $\ov{(X,
    M)}$ is the root, its left subtree is recursively defined by its
  root $\ov{(Xc,M_c)}$ and its right subtree by its root $\ov{(X
    \dotcup N, N )}$. Leaves are of the form $\ov{(X_i,G_i)}$ where
  $G_i$ is a group. The distance of such a leaf to the root is bounded
  by $\abs{M} - \abs{G_i}$. It also follows that we have:
  \begin{equation*}
    \abs{X_i} \leq \abs{X} + (\abs{M}-1) + (\abs{M}-2) + \cdots + \abs{G_i}
    \leq \abs{X} + \abs{M}(\abs{M}-1)/2.
  \end{equation*}
  
  Now, we continue by making each of the $\ov{(X_i,G_i)}$ to be inner
  nodes. Its left child is $(X_i,U_{X_i})$ its right child becomes
  $(G_i,G_i)$. This is the splitting according to \reflem{lem:dgwc}.
  We continue on the group side $(G_i,G_i)$ until all leaves are of
  the form $(G_i,G_i)$ where $G_i$ is simple. Due to \refcor{sg} the
  distance of all nodes to the root in this tree is still at most
  $\abs{M} - 1$. Since it is a binary tree we obtain at most
  $2^{\abs{M} - 1}$ leaves. In the worst case all leaves are now of
  the form $(X_i,U_{X_i})$ for which we need additional wreath
  products. However as we have $\abs{X_i} < \abs{X} + \abs{M}^2$,
  we conclude with \reflem{lem:dUn}.
 \end{proof}

\begin{remark}\label{rem:kr}
  For a moment let $\mathcal T$ be the ordered monoid of all (of
  isomorphism classes) of finite transformation monoids with the
  wreath product $\wr$ as multiplication and with strong division
  $\prec$ as ordering. Let $\mathcal P$ be the submonoid generated by
  the \tm{}s $(\smallset{a,b},U_2)$ and $(G,G)$ where $G$ is a
  non-trivial simple group.  \refthm{thm:decomp} can be rephrased by
  saying that for all $(X,M)\in \mathcal T$ there is some $(Y,K)\in
  \mathcal P$ such that $(X,M) \prec (Y,K)$. \refcor{cor:count} says
  that we need less than $\abs{M}(\abs{M}+\abs{X})2^{\abs{M}}$
  generators of $\mathcal P$ to express $(Y,K)$.
  \ERM
\end{remark}

\begin{remark}
  The Holonomy Theorem of~\cite{Eil76} provides a bound on the length
  of the decomposition in Corollary~\ref{cor:count} that is
  exponential in $|X|$ rather than $|M|$.  It therefore is a tighter
  result since in practice $|X|$ will be no bigger than $|M|$ as
  $(X,M)$ will come from an automaton in which all states are
  accessible from the initial state.  The improved bound is at the
  price of a more complicated proof.
  \ERM
\end{remark}

\begin{remark}
  It was proved by Krohn and Rhodes that the prime monoids are exactly
  the finite simple groups and the submonoids of $U_2$, where a monoid
  $M$ is prime if whenever it divides a semidirect product of two
  monoids, it divides one of the factors.  The situation for
  transformation monoids is more delicate and can be found in
  Eilenberg~\cite{Eil76}.  There the more general definition of
  division is used and it is not clear whether the prime
  transformation monoids are the same with respect to strong division.
  \ERM
\end{remark}

\section{Appendix: Missing proofs}\label{app}

%
For convenience of the reader we repeat the statements where the
proofs have previously been missing.  The proof techniques are
well-known and not meant to be original. It should however be noted
that our divisor relation is based on totally defined surjective
mappings rather than on partially defined functions. Thus, we pay
attention to that.

\medskip

{\bf \noindent \reflem{lem:wrdiv}:} If $(X,M) \prec (X',M')$ and
$(Y,N) \prec (Y',N')$, then $(X,M) \wr
(Y,N) \prec (X',M') \wr (Y',N')$.

\begin{proof}
  Let the divisions be defined by the surjective functions $\varphi
  \colon X' \to X$ and $\varphi' \colon Y' \to Y$ and the surjective
  homomorphisms $\psi \colon \widehat{M} \to M$ and $\psi' \colon
  \widehat{N} \to N$ for submonoids $\widehat{M} $ of $ M'$ and
  $\widehat{N} $ of $ N'$, respectively.
  This induces a surjective function $\varphi \times \varphi' \colon
  X' \times Y' \to X \times Y$. Let $P$ contain all functions $f \in
  \widehat{M}^{\,Y'}$ satisfying $yf = y'f$ whenever $\varphi'(y) =
  \varphi'(y')$. The set $P$ is a submonoid of $\widehat{M}^{\,Y'}$
  and hence, it is a submonoid of $M'^{Y'}$.  Suppose $f \in P$ and $n
  \in \widehat{N}$. If $\varphi(y') = \varphi(y)$ for $y,y' \in Y'$,
  then
  \begin{equation*}
    \varphi'(y\cdot n) = \varphi'(y)\cdot \psi'(n)
    = \varphi'(y')\cdot \psi'(n) = \varphi'(y'\cdot n).
  \end{equation*}
  Since $y(n * f) = (y \cdot n)f = (y' \cdot n)f = y'(n * f)$
  by $f \in P$, it follows $n * f \in P$. Hence $\widehat{N} *
  P \subseteq P$ and $P \rtimes \widehat{N}$ is a submonoid of
  $M'^{Y'} \rtimes N'$.  We obtain a surjective homomorphism
  $\widetilde{\psi} \colon P \to M^Y$ with
  $\varphi(y)\widetilde{\psi}(f) = \psi(yf)$. By construction of $P$,
  this definition is independent of the choice of $y \in Y'$. For all
  $(x,y) \in X' \times Y'$ and for all $(f,n) \in P \times
  \widehat{N}$ we have
  \begin{align*}
    (\varphi \times \varphi') \big( (x,y) \cdot (f,n) \big)
    &= (\varphi \times \varphi')
    \big(x \cdot y f,y \cdot n \big) \\
    &= \big(\varphi(x \cdot yf),
    \varphi'(y \cdot n) \big) \\
    &= \big( \varphi(x) \cdot \psi(yf), \varphi'(y) \cdot \psi'(n) \big) \\
    &= \big( \varphi(x) \cdot \varphi'(y)\widetilde{\psi}(f),
    \varphi'(y) \cdot \psi'(n) \big)  \\
    &= (\varphi \times \varphi')(x,y) \cdot (\widetilde{\psi}(f),\psi'(n)).
%
  \end{align*}
  Thus $\varphi \times \varphi'$ and $\widetilde{\psi} \times \psi' \colon
 P \times \widehat{N} \to M^Y \rtimes N$ define a strong division.
\end{proof}

{\bf \noindent \refprp{prp:dg}:}  If $N$ is a normal subgroup of $G$, then $(G,G) \prec (N,N) \wr (G/N,G/N)$.

\begin{proof}
  Let $h_1,\ldots,h_n \in G$ be representatives of the cosets of $N$.
  By identifying cosets with their representatives, we can assume that
  $G/N$ acts on $\oneset{h_1,\ldots,h_n}$.  For each $g \in G$ let
  $[g] = h_i$ such that $Ng = N h_i$.  We define $\varphi \colon N
  \times \oneset{h_1,\ldots,h_n} \to G$ by $\varphi(n,h_i) = n h_i$.
  A cover of $g \in G$ is $(f_g,[g])$ where $hf_g = hg[hg]^{-1}$ for
  $h \in \oneset{h_1,\ldots,h_n}$.  Note that $hg[hg]^{-1} \in N$
  since $Nhg = N[hg]$. Now,
  \begin{equation*}
    \varphi\big( (n,h) \cdot (f_g,[g]) \big)
    = \varphi\big( n \cdot hf_g, \big[h [g]\big] \big)
    = \varphi( n hg [hg]^{-1}, [hg] )
    = n hg = \varphi(n,h) \cdot g.
%
  \end{equation*}
  Thus $(G,G)$ strongly divides $(N,N) \wr (G/N,G/N)$.
\end{proof}

{\bf \noindent  \reflem{lem:dgwc}:}  
Let $(X,G)$ be a faithful transformation monoid such that $G$ is a
group. Then 
\begin{equation*}
  \overline{(X,G)} \prec (X,U_X) \wr (G,G).
\end{equation*}

\begin{proof}
  Let $\varphi(x,g) = x \cdot g$ for all $(x,g) \in X \times G$. A
  cover of $g \in G$ is $(k_1,g)$ with $h k_1 = 1$ for all $h \in G$
  and a cover of $\overline{x} \in \overline{X}$ is $(f_x,1)$ with $h
  f_x = \overline{x \cdot h^{-1}}$. Now, for all $(y,h) \in X \times
  G$ we have
  \begin{align*}
    \varphi \big( (y,h) \cdot (k_1,g) \big)
    &= \varphi( y, hg ) = y \cdot hg = \varphi(y,h) \cdot g \\
    \varphi \big( (y,h) \cdot (f_x,1) \big)
    &= \varphi( y \cdot hf_x , h ) = \varphi( y \cdot
    \overline{x\cdot h^{-1}}, h) = \varphi( x\cdot h^{-1}, h )
    = x = \varphi(y,h) \cdot
    \overline{x}.
%
  \end{align*}
  Therefore, $\overline{(X,G)}$ strongly divides $(X,U_X) \wr (G,G)$.
\end{proof}

{\bf \noindent \reflem{lem:dUn}:}
  $(\smallset{a_0,\ldots,a_n},U_{n+1}) \prec
  (\smallset{a_0, \ldots, a_{n-1}},U_{n}) \times
  (\smallset{a_0,a_n},U_2)$.

\begin{proof}
  Let $\varphi(a_k,a_\ell) = a_{\max(k,\ell)}$ for all $(a_k,a_\ell)
  \in \smallset{a_0, \ldots, a_{n-1}} \times \smallset{a_0,a_n}$. A
  cover of $\overline{a_i}$ with $i<n$ is
  $(\overline{a_i},\overline{a_0})$ and a cover of $\overline{a_n}$ is
  $(\overline{a_0},\overline{a_n})$.  Now, for all $(a_k,a_\ell) \in
  \smallset{a_0, \ldots, a_{n-1}} \times \smallset{a_0,a_n}$ we have
  \begin{align*}
    \varphi\big( (a_k,a_\ell) \cdot (\overline{a_i},\overline{a_0}) \big)
    &= \varphi( a_i, a_0 ) = a_i = \varphi( a_k, a_\ell ) \cdot \overline{a_i} \\
    \varphi\big( (a_k,a_\ell) \cdot (\overline{a_0},\overline{a_n}) \big)
    &= \varphi( a_0, a_n) = a_n = \varphi( a_k, a_\ell ) \cdot \overline{a_n}
%
  \end{align*}
  which proves the claim.
\end{proof}

%

{\small
\newcommand{\Ju}{Ju}\newcommand{\Ph}{Ph}\newcommand{\Th}{Th}\newcommand{\Ch}{C%
h}\newcommand{\Yu}{Yu}\newcommand{\Zh}{Zh}

}

\end{document}